\documentclass[11p,reqno]{amsart}
\usepackage{amssymb,mathrsfs,graphicx}
\usepackage{nccmath}
\usepackage{ifthen}
\usepackage{colortbl}
\usepackage{color}
\usepackage{hyperref}
\usepackage{url}
\definecolor{black}{rgb}{0.0, 0.0, 0.0}
\definecolor{red}{rgb}{1.0, 0.5, 0.5}
\provideboolean{shownotes} 
\setboolean{shownotes}{false} 
%
\newcommand{\margnote}[1]{
	\ifthenelse{\boolean{shownotes}}%
	{\marginpar{\raggedright\tiny\texttt{#1}}}%
	{}%
}
\newcommand{\hole}[1]{
	\ifthenelse{\boolean{shownotes}}%
	{\begin{center} \fbox{ \rule {.25cm}{0cm} \rule[-.1cm]{0cm}{.4cm}
				\parbox{.85\textwidth}{\begin{center} \texttt{#1}\end{center}} \rule
				{.25cm}{0cm}}\end{center}} {} }

\topmargin-0.1in \textwidth6.in \textheight8.5in \oddsidemargin0.1in
\evensidemargin0.1in

\title[]{On the eigenvalues of a class of matrices with displacement structure arising in optimal control}

\author[Peters]{Andr\'es A. Peters}
\address[Andr\'es A. Peters]{\newline Advanced Center for Electrical and Electronic Engineering \newline
	Universidad T\'ecnica Federico Santa Mar\'ia, Valparaíso, Chile}
\email{andres.peters@usm.cl}

\author[Vargas]{Francisco J. Vargas}
\address[Francisco J. Vargas]{\newline Advanced Center for Electrical and Electronic Engineering \newline
	Universidad T\'ecnica Federico Santa Mar\'ia, Valparaíso, Chile}
\email{francisco.vargasp@usm.cl}

{

	\numberwithin{equation}{section}
	
	\newtheorem{theorem}{Theorem}[section]
	
	\newtheorem{corollary}{Corollary}[section]
	\newtheorem{proposition}{Proposition}[section]
	\newtheorem{remark}{Remark}[section]

\def\({\begin{eqnarray}}
\def\){\end{eqnarray}}
\def\[{\begin{eqnarray*}}
\def\]{\end{eqnarray*}}


\begin{document}
\allowdisplaybreaks

\date{\today}
\subjclass[]{}
\keywords{}
	
	\begin{abstract}
		In this work we present a framework for studying the eigenvalues of a family of matrices with a particular displacement  structure. The family admits a specific decomposition as the product of an upper and a lower triangular matrices having an increasing number of real parameters in predefined positions. Similar matrices appear naturally when solving some kinds of optimal control problems. In our case, as stated by Nehari's theorem, the eigenvalues and eigenvectors fully characterize the solution. Commonly, such problems are solved by numerical means, making it difficult to obtain insight in the role that the parameters play on the solution. Our results provide a framework that enables to compute individually, under some simple assumptions, the eigenvalues of the matrices as roots of a monotone transcendental function with many desirable properties. In order to do so, we first obtain a three-term recursive characterization of the corresponding characteristic polynomials. This enables the aforementioned representation. Our framework also allows for the computation of bounds, numerical methods and even analytical characterizations with closed form solutions, whenever the problem parameters satisfy simple conditions.
	\end{abstract}
	
\maketitle \centerline{\date}

\tableofcontents

	\section{Introduction}
	
	Consider the family of matrices given by
	\begin{equation}\label{eq:Jn_triang_fact}	
	J_n=
	\setlength\arraycolsep{1.2pt}
	\begin{pmatrix}
	\mfrac{\alpha_1+1}{2} & \alpha_2 & \alpha_3 & \cdots & \alpha_n\\
	0   & \mfrac{\alpha_2+1}{2} & \alpha_3 & \cdots & \vdots\\
	0   &   0 & \mfrac{\alpha_3+1}{2} & \ddots & \vdots\\
	\vdots & \ddots & \ddots & \ddots & \alpha_n\\
	0 & \cdots & \cdots & 0 & \mfrac{\alpha_n+1}{2}
	\end{pmatrix}
    \setlength\arraycolsep{1.2pt}
	\begin{pmatrix}
	\mfrac{\alpha_1+1}{2} & 0 & \cdots & \cdots & 0\\
	\alpha_1 & \mfrac{\alpha_2+1}{2} & \ddots & \ddots & \vdots\\
	\vdots   &   \alpha_2 & \ddots & \ddots & \vdots\\
	\vdots   & \vdots & \ddots & \mfrac{\alpha_{n-1}+1}{2} & 0\\
	\alpha_1 & \alpha_2 & \cdots & \alpha_{n-1} & \mfrac{\alpha_n+1}{2}
	\end{pmatrix},
	\end{equation}
	where $\mathcal{A}=\{\alpha_1,\alpha_2,\ldots,\alpha_n\}\in\mathbb{R}$ are parameters. Such structured matrices arise when solving certain kinds of optimal control problems \cite{petersalg2009,petvar2017,MIDDLETON2004}, by means of the use of Nehari's Theorem \cite{chuichen12}. In particular, the eigenvalues and eigenvectors of $J_n$ are needed in order to construct an optimal controller and determine the performance achieved by it. Commonly, numerical methods are used to solve such problems, which frequently hide the behaviour of solutions in terms of the parameters defining the problem \cite{GaAp94,Tits}. In this work, we obtain a mathematical description of the eigenvalues of $J_n$ revealing several of their properties with respect to the parameter set $\mathcal{A}$. 
	
	For the aforementioned optimal control problem,  it is highly desirable to possess mathematical descriptions that allow designers to study its solutions in a deeper way. They provide tools to understand or interpret physical properties of dynamical systems under the presence of feedback. Moreover, closed form solutions for such problems, in terms of the systems parameters, reveal the best achievable performance for certain configurations of control problems, pointing engineers in the correct direction when designing or implementing less sophisticated and, most of the time, more economical control schemes. A notable example of this is when reducing the high order solutions of optimal control problems \cite{antoulas2001survey}. In this context, we believe that the results presented in this work provide sensible tools for dealing with the related optimization problems, giving insight about the nature of the eigenvalues in terms of the defining parameters. We also think that these results might be the basis for finding new algorithms and methods for related problems. 
	
	As motivation, and to highlight that the structure of $J_n$ is non-trivial, we can see that for $n=1$ the eigenvalue of $J_1$ is given by $\lambda= 0.25(\alpha_1+1)^2$. For $J_2$ we have that the two eigenvalues are 
	\begin{align*}
	\lambda=&\dfrac{(\alpha_1+1)^2+(\alpha_2+1)^2+4\alpha_1\alpha_2 \pm(\alpha_1 + \alpha_2)\sqrt{(\alpha_1+2)^2+(\alpha_2+2)^2+6\alpha_1\alpha_2-4}}{8}.
	\end{align*}
	It is of course possible to obtain algebraic expressions for $n=3$, however, it would take several lines of cumbersome expressions to write each of the three eigenvalues. In this paper we obtain results that allow to study the behaviour of the eigenvalues of $J_n$ for arbitrary values of $n$. For instance, if $\alpha_{i}\geq 0$, for all $i$,\footnote{ $\alpha_{i}\geq 0$, for all $i$ is the unique necessary condition in order to deal with  optimal control problems as the aforementioned. In this paper, however, we are not restricted to that case and results are presented for $\alpha_{i} \in\mathbb{R}$, for all $i$} we can claim that the corresponding eigenvalues $\lambda_k$, $k=1,2,\dots n$, of $J_n$  are the solutions of  
	\begin{align}\label{eq:arctan}
	\arctan\left(\dfrac{1}{\sqrt{4\lambda_k-1}}\right) +2\sum_{i=1}^{n}\arctan\left(\dfrac{\alpha_i}{\sqrt{4\lambda_k-1}}\right)=(2k-1)\dfrac{\pi}{2}
	\end{align}
	for $k=1,\ldots,n.$ Similar claims can be made whenever the eigenvalues of $J_n$ admit real solutions. Even though the properties of the $\arctan(\cdot)$ function clearly imply that \eqref{eq:arctan} should be equivalent to obtaining the roots of a polynomial, we believe that this new representation of the eigenvalue problem provides a lot of insight into the behaviour of the solutions in terms of the parameters $\alpha_i$, specially when considering the originating Control Theory problems. For example, our analysis shows that if all $\alpha_i\geq 0$, then $J_n$ has only real, positive and distinct eigenvalues. Moreover, if $\alpha_i=-\alpha_m=\alpha$ for some $i\neq m$, equation \eqref{eq:arctan} also holds, $J_n$ has an eigenvalue of multiplicity 2 at $(1-\alpha^2)/4$, while the remaining $n-2$ eigenvalues correspond to the eigenvalues of $J_{n-2}$. Also, from \eqref{eq:arctan}, it is possible to generate simple bounds for the values of the eigenvalues that do not seem obvious from the expression for $J_n$. Another benefit of the results proposed in this work correspond to the case of repeated values of the parameters. In fact, if  $\alpha_i=\alpha$ for all $i$, \eqref{eq:arctan} collapses to
	\begin{align}
	\arctan\left(\dfrac{1}{\sqrt{4\lambda_k-1}}\right) +2n\arctan\left(\dfrac{\alpha}{\sqrt{4\lambda_k-1}}\right)=(2k-1)\dfrac{\pi}{2},
	\end{align}
	and this equation provides a faster and more accurate way to compute the eigenvalues for large $n$ when compared to standard computational methods (such as the ones used by Matlab).
	
	Our study on the eigenvalues of $J_n$ was also motivated by the fact that non trivial eigenvalue problems with closed form solutions for structured matrices do exist. 
	For example, a tridiagonal Toeplitz matrix in $\mathbb{R}^{n\times n}$ has known eigenvalues in terms of the three parameters that define it \cite{yueh2005eigenvalues,noschese2013tridiagonal,BUCHHOLZER20121837}. Another well studied related problem is the inverse eigenvalue problem, which in simple words can be said equivalent to find, if possible, a matrix with fixed structure that possesses a set of eigenvalues and/or eigenvectors given a priori \cite{FRIEDLAND197715,chu1998inverse,datta2011solution,higgins2016inverse}. Our findings where sparked by a similar idea. We observed patterns when solving some eigenvalue problems which generated \eqref{eq:arctan} as a conjecture. Our goal was then to show whether it is always possible to connect \eqref{eq:arctan} to a characteristic polynomial, and eventually to our original matrices $J_n$. We notice that, in general, it is possible to write recursive relations for the characteristic polynomials of general tridiagonal matrices through the use of the \emph{Continuant} \cite{muir2003treatise,hearon1970roots}. We use this fact and other manipulations of $J_n$ in order to achieve our goal.
	
	On the other hand, it is important to note that the family of matrices $J_n$ belongs to the class of $n\times n$ rank displacement matrices where only $\mathcal{O}(n)$ parameters are needed to define them (see \cite{kailath1995displacement,pan2012structured} and the references therein). In particular, given a fixed pair of matrices $\{A,B\}$, and a field $\mathbb{F}$, a \emph{Stein} displacement operator $L$ for square matrices is defined as $L:\mathbb{F}^{n\times n}\rightarrow \mathbb{F}^{n\times n}$, such that $L=\Delta_{A,B}$, where
	\begin{align}
	L(M)=\Delta_{A,B}(M)=M-AMB.
	\end{align}
	Then, the image $L(M)$ of the operator $L$ is called the \emph{displacement} of $M$ and the rank of $L(M)$ is called the \emph{displacement rank} of $M$. Such definition aims to exploit the structure of the matrix when the displacement rank of $M$ is low and $A,B$ are simple and sparse for matrix computations, inversions, etc \cite{PANstructured,PanWang2003}. In our current case, $J_n$ is similar to the product of two matrices which are the solutions of the Lyapunov equations
	\begin{equation}
	\Delta_{A,A^\top}(M)=C,\qquad \qquad
	\Delta_{A^\top,A}(M)=K,
	\end{equation}
	with $C,K$ being of rank 1.	It is clear that the solutions of the aforementioned equations are of displacement rank 1. However, it is well known that even if two matrices are structured, their product does not necessarily inherit the same explicit structure of the factors. Nevertheless it should be expected for it to also have a low displacement rank, whenever the factors have it \cite{kailath1995displacement}. 
	
	Given the previous observations, we believe that our results complement the field of displacement structured matrices. For example, in \cite{pan2012structured,PanWang2003} the inversion of displacement operators was suggested for the solution of Nenvalinna-Pick interpolation and Nehari problems, which are common generators of ``\emph{skew-Hankel-like}" matrices, and the main motivation of our present work. It is considered in \cite{PanWang2003} that the hidden structure of matrices similar to $J_n$ is hard to exploit, however numerical methods are readily available for certain computations. Fast algorithms for computations on similar matrices can be found in \cite{bostanetall2017} and the references therein. We consider that our results provide another connection between displacement methods and optimal control problems, providing an alternative analytical description for a type of problem that seldom allows one.
	
	The presentation of our work is organized as follows. In section \ref{sec:polyJ} we obtain a recursive formula for the sequence of characteristic polynomials of $J_n$. In section \ref{sec:arctan} we derive a recursive formula for a sequence of polynomials that can be connected to \eqref{eq:arctan}. We show in Section \ref{sec:same} that both sequences have indeed the same roots. We present some applications and considerations of our findings in Section \ref{sec:numerics}. Final remarks are given in Section \ref{sec:conclusion}.
	
	\textbf{Notation:} The imaginary unit is denoted by $j^2=-1$. For a number $Q\in\mathbb{C}$, $\Re\{Q\}$ and $\Im\{Q\}$ denote its real and imaginary parts respectively and $\angle\{Q\}$ denotes its argument. For any given square matrix $M$, $\sigma(M)$ denotes its spectrum and $\rho(M)$ its spectral radius.
	
	%

	\section{Characteristic polynomial of $J_n$}\label{sec:polyJ}
	In this section we provide a recursive way to compute the characteristic polynomial of $J_n$ defined in \eqref{eq:Jn_triang_fact}. We first define the diagonal matrices
	\begin{align}\label{eq:D}
	D_\alpha=\begin{pmatrix}
	\alpha_1 &  &  & \\ 
	& \alpha_2 &  & \\
	&  & \ddots & \\
	&  &  & \alpha_n 
	\end{pmatrix},
	\hspace{3mm}
	D_\beta=\begin{pmatrix}
	\beta_1 &  &  & \\ 
	& \beta_2 &  & \\
	&  & \ddots & \\
	&  &  & \beta_n 
	\end{pmatrix}, \quad \text{with} \quad \beta_i \triangleq \dfrac{\alpha_i-1}{2\alpha_i}. 
	\end{align}
	
	We note that the particular structure of $J_n$ in \eqref{eq:Jn_triang_fact} allow us to write
	\begin{align}\label{eq:Jnfactored}
	J_n=\left[ L_1-D_{\beta}\right] D_{\alpha}\left[ L_1^\top-D_{\beta}\right] D_{\alpha},
	\end{align}
	where $L_1$ is the upper triangular matrix with $1$s at the non-zero entries. The inverse of $L_1$ and $L_1^\top$ are straightforward to compute:
	\begin{align}
	L_1^{-1}=
	\left(\begin{array}{ccccc} 
	1 & -1 & 0 & \cdots & 0\\ 
	0 & 1 & -1 & \ddots &\vdots\\
	\vdots & \ddots & 1 & \ddots & 0\\
	\vdots & \ddots & \ddots & \ddots & -1 \\
	0 & \cdots & \cdots & 0 & 1\end{array}\right),\quad (L_1^\top)^{-1}=(L_1^{-1})^\top.
	\end{align}
	We will obtain the characteristic polynomial of $J_n$ as the determinant of a tridiagonal matrix, obtaining a recursive relation for their computation. 
	\begin{proposition}\label{prop1}
		The characteristic polynomial $P_n(x)$ of the matrix $J_n$ given by \eqref{eq:Jn_triang_fact}, satisfies the recursive equation
		\begin{multline}\label{eq:recJn}
		P_n(x)=\left[ \left(\dfrac{1}{\alpha_n}+\dfrac{1}{\alpha_{n-1}}\right)x -1-\dfrac{(\alpha_n-1)^2}{4\alpha_n}-\dfrac{(\alpha_{n-1}-1)^2}{4\alpha_{n-1}}\right] P_{n-1}(x) \nonumber\\
		-\left[ \dfrac{(\alpha_{n-1}-1)^2}{4\alpha_{n-1}}-\dfrac{\alpha_{n-1}-1}{2}-\dfrac{1}{\alpha_{n-1}}x\right] ^2P_{n-2}(x),
		\end{multline}
		with initial conditions $P_0(x)=1$ and $P_1(x)=x/\alpha_1-(\alpha_1+1)^2/(4\alpha_1).$
	\end{proposition}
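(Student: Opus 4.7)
The plan is to rewrite $\det(xI-J_n)$ as the determinant of a symmetric tridiagonal matrix whose leading $k\times k$ principal minor equals $P_k(x)$ for every $k$; the classical Continuant (three-term) recursion for tridiagonal determinants then delivers the claimed identity.

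I would begin with the factorization $J_n=UW$ from \eqref{eq:Jnfactored}, where $U=(L_1-D_\beta)D_\alpha$ is upper triangular and $W=(L_1^\top-D_\beta)D_\alpha$ is lower triangular. Since $UW$ and $WU$ share the same characteristic polynomial, and since $WU\,D_\alpha^{-1}=(L_1^\top-D_\beta)D_\alpha(L_1-D_\beta)$ is symmetric, pulling $D_\alpha$ out on the right yields
\[
\det(xI-J_n) \;=\; \Big(\prod_{i=1}^n\alpha_i\Big)\det M', \qquad M':=xD_\alpha^{-1}-(L_1^\top-D_\beta)\,D_\alpha\,(L_1-D_\beta),
\]
so $P_n(x)=\det M'$, consistent with the stated initial condition $P_1(x)=x/\alpha_1-(\alpha_1+1)^2/(4\alpha_1)$.

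Next, I would conjugate $M'$ by $(L_1^\top)^{-1}$ on the left and $L_1^{-1}$ on the right, which preserves the determinant since $\det L_1=1$. Using $(L_1^\top)^{-1}(L_1^\top-D_\beta)=I-(L_1^\top)^{-1}D_\beta$ (lower bidiagonal), $(L_1-D_\beta)L_1^{-1}=I-D_\beta L_1^{-1}$ (upper bidiagonal), and the elementary fact that $(L_1^\top)^{-1}\mathrm{diag}(\gamma_i)L_1^{-1}$ is symmetric tridiagonal with diagonal entry $\gamma_i+\gamma_{i-1}$ (setting $\gamma_0=0$) and subdiagonal entry $-\gamma_{i-1}$, a straightforward expansion shows that the conjugate is a symmetric tridiagonal matrix $T_n$ whose entries are
\begin{align*}
T_n(1,1)&=\frac{x}{\alpha_1}-\frac{(\alpha_1+1)^2}{4\alpha_1},\\
T_n(i,i)&=x\Big(\frac{1}{\alpha_{i-1}}+\frac{1}{\alpha_i}\Big)-\frac{(\alpha_i+1)^2}{4\alpha_i}-\frac{(\alpha_{i-1}-1)^2}{4\alpha_{i-1}}, \quad 2\le i\le n,\\
T_n(i,i-1)&=-\frac{4x+\alpha_{i-1}^2-1}{4\alpha_{i-1}}, \quad 2\le i\le n.
\end{align*}
The decisive feature is that the only diagonal entry involving a single $\alpha$ sits at position $(1,1)$; consequently the leading $k\times k$ principal block of $T_n$ depends only on $\alpha_1,\ldots,\alpha_k$ and coincides with the analogous matrix produced from $J_k$, so its determinant equals $P_k(x)$.

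Expanding $\det T_n$ along the last row (the Continuant identity) gives $P_n = T_n(n,n)\,P_{n-1} - T_n(n,n-1)^2\,P_{n-2}$. Applying the identity $(\alpha+1)^2/(4\alpha)=1+(\alpha-1)^2/(4\alpha)$ to $T_n(n,n)$ brings the coefficient of $P_{n-1}$ into the form appearing in the proposition, and the algebraic rewriting $T_n(n,n-1)=(\alpha_{n-1}-1)^2/(4\alpha_{n-1})-(\alpha_{n-1}-1)/2-x/\alpha_{n-1}$ matches the bracketed expression whose square multiplies $P_{n-2}$; the base cases $P_0=1$ and $P_1=T_1(1,1)$ are immediate. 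The main obstacle is the choice of factorization: starting instead from $xD_\alpha^{-1}-(L_1-D_\beta)D_\alpha(L_1^\top-D_\beta)$ and conjugating by $L_1^{-1}$ and $(L_1^{-1})^\top$ yields a tridiagonal matrix whose boundary entry sits at position $(n,n)$, so that its leading $(n-1)$-principal minor is \emph{not} $P_{n-1}$ and the standard expansion fails to recover the stated recursion directly.
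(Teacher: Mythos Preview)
Your proof is correct and follows essentially the same route as the paper: reduce $P_n(x)$ to the determinant of a symmetric tridiagonal matrix whose entries depend on $\alpha_1,\dots,\alpha_n$ in the ``right'' nested way, and then invoke the Continuant recursion. Your tridiagonal entries agree with the paper's $\gamma_i,\delta_i$ after the identity $(\alpha+1)^2/(4\alpha)=1+(\alpha-1)^2/(4\alpha)$, and you are actually more careful than the paper in tracking the scalar $\prod_i\alpha_i$ (the paper's displayed equality $\det(xI-J_n)=\det(\text{tridiagonal})$ tacitly drops it).

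The one genuine difference is how the ``boundary'' diagonal entry is placed at position $(1,1)$. The paper works directly with $xI-J_n$, multiplies by $L_1^{-1}$ and $D_\alpha^{-1}(L_1^{-1})^\top$, and obtains a tridiagonal matrix whose exceptional entry sits at $(n,n)$; it then explicitly ``relabels and reorders'' (a simultaneous row/column reversal) to move that entry to $(1,1)$ so that leading principal minors match $P_k$. You achieve the same effect more cleanly by first swapping $UW\to WU$ via $\det(xI-UW)=\det(xI-WU)$, which places the exceptional entry at $(1,1)$ from the outset and makes the nesting of the $P_k$ transparent without any reindexing. Your closing remark about why the ``obvious'' factorization fails without this device is exactly the point the paper handles by its reordering step.
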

	\begin{proof}
		The characteristic polynomial of $J_n$ can be computed as
		\begin{align}
		P_n(x)=\det\left(xI-J_n\right),
		\end{align}
		and from \eqref{eq:Jnfactored}, multiplying accordingly, we can write
		\begin{multline}
		\det(xI-J_n)=\\
		\label{eq:det(xI-J_n)}
		\det\left(xL_1^{-1}D_{\alpha}^{-1}(L_1^{-1})^\top-D_{\alpha}+L_1^{-1}D_{\alpha}D_{\beta}
		+D_{\alpha}D_{\beta}(L_1^{-1})^\top-L_1^{-1}D_{\alpha}D_{\beta}^2(L_1^{-1})^\top\right).
		\end{multline}
		It can be checked that every term inside the determinant on the right hand side of \eqref{eq:det(xI-J_n)} is tridiagonal
		\begin{align}
		L_1^{-1}D_\alpha^{-1}(L_1^{-1})^\top=
		\begin{pmatrix}
		\dfrac{1}{\alpha_1}+\dfrac{1}{\alpha_2} & -\dfrac{1}{\alpha_2} & 0 & \cdots & 0 & 0 \\[3mm]
		-\dfrac{1}{\alpha_2} & \dfrac{1}{\alpha_2}+\dfrac{1}{\alpha_3} & -\dfrac{1}{\alpha_3} & \cdots & 0 & 0\\
		0 & -\dfrac{1}{\alpha_3} & \ddots & \cdots & 0 & 0\\
		\vdots & \vdots & \vdots & \ddots & \vdots & \vdots \\
		0 & 0 & 0 & \cdots &  \dfrac{1}{\alpha_{n-1}}+\dfrac{1}{\alpha_n} & -\dfrac{1}{\alpha_n}\\
		0 & 0 & 0 & \cdots &  -\dfrac{1}{\alpha_n} & \dfrac{1}{\alpha_n}
		\end{pmatrix}
		\end{align}
		
		\begin{multline}
		L_1^{-1}D_{\alpha}D_{\beta}^2(L_1^{-1})^\top=\\
		\begin{pmatrix}
		\alpha_1\beta_1^2+\alpha_2\beta_2^2 & -\alpha_2\beta_2^2 & 0 & \cdots & 0 & 0 \\ 
		-\alpha_2\beta_2^2 & 	\alpha_2\beta_2^2+\alpha_3\beta_3^2 & -\alpha_3\beta_3^2 & \cdots & 0 & 0\\
		0 & -\alpha_3\beta_3^2 & \ddots & \cdots & 0 & 0\\
		\vdots & \vdots & \vdots & \ddots & \vdots & \vdots \\
		0 & 0 & 0 & \cdots &  \alpha_{n-1}\beta_{n-1}^2+\alpha_n\beta_n^2 & -\alpha_n\beta_n^2\\
		0 & 0 & 0 & \cdots &  -\alpha_n\beta_n^2 & \alpha_n\beta_n^2
		\end{pmatrix}
		\end{multline}
		
		\begin{equation}
		D_{\alpha}-L_1^{-1}D_{\alpha}D_{\beta}-D_{\alpha}D_{\beta}(L_1^{-1})^\top=
		\begin{pmatrix}
		1 & \alpha_2 \beta_2 & 0 & \cdots & 0 & 0 \\ 
		\alpha_2 \beta_2 & 1 & \alpha_3 \beta_3 & \cdots & 0 & 0\\
		0 & \alpha_3 \beta_3 & \ddots & \cdots & 0 & 0\\
		\vdots & \vdots & \vdots & \ddots & \vdots & \vdots \\
		0 & 0 & 0 & \cdots &  1 & \alpha_n \beta_n\\
		0 & 0 & 0 & \cdots &  \alpha_n \beta_n & 1
		\end{pmatrix}.
		\end{equation}
		Now, we can relabel and reorder the elements inside of \eqref{eq:det(xI-J_n)} in order to compute the determinant of $\det(xI-J_n)$ as the determinant of a symmetric tridiagonal matrix
		\begin{align}
		\det(xI-J_n)=\det\begin{pmatrix}
		\gamma_1(x) & \delta_1(x) & 0 & \cdots & 0 & 0 \\ 
		\delta_1(x) & \gamma_2(x) & \delta_2(x) & \cdots & 0 & 0\\
		0 & \delta_2(x) & \ddots & \cdots & 0 & 0\\
		\vdots & \vdots & \vdots & \ddots & \vdots & \vdots \\
		0 & 0 & 0 & \cdots &  \gamma_{n-1}(x) & \delta_{n-1}(x)\\
		0 & 0 & 0 & \cdots &  \delta_{n-1}(x) & \gamma_{n}(x)
		\end{pmatrix}
		\end{align}
		where
		\begin{align}
		\gamma_1(x)=&\dfrac{x}{\alpha_1}-\dfrac{(\alpha_1+1)^2}{4\alpha_1} \\
		\gamma_n(x)=&\left(\dfrac{1}{\alpha_{n-1}}+\dfrac{1}{\alpha_n}\right)x-1-\left(\dfrac{(\alpha_{n-1}-1)^2}{4\alpha_{n-1}}+\dfrac{(\alpha_{n}-1)^2}{4\alpha_{n}}\right),\quad n>1\\
		\delta_n(x)=&\left(-x\dfrac{1}{\alpha_n}+\dfrac{(\alpha_{n}-1)^2}{4\alpha_{n}}-\dfrac{\alpha_n-1}{2}\right),\quad n\geq 1.
		\end{align}
		Using the \emph{Continuant} \cite{hearon1970roots} we can conclude that the characteristic polynomial of $J_n$ satisfies the recursive equation 
		\begin{multline}
		P_n(x)=\left[ \left(\dfrac{1}{\alpha_n}+\dfrac{1}{\alpha_{n-1}}\right)x -1-\dfrac{(\alpha_n-1)^2}{4\alpha_n}-\dfrac{(\alpha_{n-1}-1)^2}{4\alpha_{n-1}}\right] P_{n-1}(x) \\
		-\left[ \dfrac{(\alpha_{n-1}-1)^2}{4\alpha_{n-1}}-\dfrac{\alpha_{n-1}-1}{2}-\dfrac{1}{\alpha_{n-1}}x\right] ^2P_{n-2}(x)
		\end{multline}
		with initial conditions $P_0(x)=1$, $P_1(x)=x/\alpha_1-(\alpha_1+1)^2/(4\alpha_1)$. 
	\end{proof} 
	
	It is possible to verify that the recursion from Proposition \ref{prop1} generates the same polynomials as the ones obtained by directly computing $\det(xI-J_n)$ with $J_n$ defined as in \eqref{eq:Jn_triang_fact}. It can also be noted that the initial conditions $P_0(x)=\alpha_1$ and $P_1(x)=x-(\alpha_1+1)^2/(4\alpha_1)$ yield polynomials with the same roots.

	\section{A sequence of polynomials whose roots are linked to \eqref{eq:arctan}}\label{sec:arctan}
	As discussed in the introduction, for non-negative values of the parameters, we conjecture that the $n$ eigenvalues of $J_n$, $\lambda_k$, $k=1,\ldots,n$, satisfy the equations
	\begin{align}
	\arctan\left(\dfrac{1}{\sqrt{4\lambda_k-1}}\right) +2\sum_{i=1}^{n}\arctan\left(\dfrac{\alpha_i}{\sqrt{4\lambda_k-1}}\right)=(2k-1)\dfrac{\pi}{2}
	\end{align}
	for $k=1,\ldots,n$. Numerical examples suggest that these equations are indeed yielding the eigenvalues of $J_n$. We proceed to find a sequence of polynomials that have for roots the solutions of \eqref{eq:arctan}. In the first place, we note that these equations can be interpreted as the angle of a product of complex factors. A possibility is then to have a polynomial $R_n(x)$ with real coefficients, written as the sum of a polynomial with complex coefficients $Q_n(x)$ plus its conjugate $\overline{Q}_n(x)$. Therefore, some of the roots of the polynomials $R_n(x)$ (or $\Re\{Q_n(x)\}$) will be given by the values of $x$ where the complex number $Q_n(x)$ is pure imaginary, or, interchangeably, when $\angle\{Q_n(x)\}$, that is, the argument of the complex number $Q_n(x)$, is an odd multiple of $\pi/2$. We now have the following result.
	
	\begin{proposition}\label{prop2}
		Let $R_n(x)=Q_n(x)+\overline{Q}_n(x)$ be a polynomial of order $n$ with real coefficients and which has as roots the $n$ solutions of \eqref{eq:arctan} when $\alpha_i\geq 0$ for all $i$. Then, the real part of the complex polynomial $Q_n(x)$ satisfies the recurrence equation
		\begin{align}\label{eq:recQn}
		\Re\{Q_n(x)\}=&\left(\frac{(4x-1)-\alpha_n\alpha_{n-1}}{\alpha_{n-1}} \right) \left(\frac{\alpha_n +\alpha_{n-1}}{4} \right)\Re\{Q_{n-1}(x)\}\nonumber\\
		&	-\dfrac{\alpha_n}{\alpha_{n-1}} \left(x+ \frac{(\alpha_{n-1}^2 -1)}{4} \right)^2\Re\{Q_{n-2}(x)\},
		\end{align}
		with initial conditions $\Re\{Q_0(x)\}=1$ and $\Re\{Q_1(x)\}=x-(\alpha_1+1)^2/4.$
	\end{proposition}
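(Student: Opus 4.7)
The plan is to build an explicit complex function $Q_n$ whose argument is precisely the left-hand side of \eqref{eq:arctan}, read off a three-term real recurrence for $\Re\{Q_n\}$ from the obvious first-order complex recurrence satisfied by $Q_n$, and match the resulting coefficients against \eqref{eq:recQn}. The workhorse is the substitution $s=\sqrt{4x-1}$ together with the elementary identity $\angle\{s+j\alpha\}=\arctan(\alpha/s)$, valid for $s,\alpha>0$.

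Guided by this, I would define
\begin{equation*}
Q_n(x) \;=\; \frac{s+j}{4^n\,s}\,\prod_{i=1}^n (s+j\alpha_i)^2 .
\end{equation*}
Each factor $(s+j\alpha_i)^2=(4x-1-\alpha_i^2)+2j\alpha_i s$ has the form \emph{real polynomial in $x$ plus $js$ times a real polynomial in $x$}, a structure preserved under multiplication since $s^2=4x-1$; after the extra multiplication by $s+j$ and the division by $s$ one sees that $\Re\{Q_n\}$ is a real polynomial in $x$ of degree $n$. By construction, its $n$ roots are exactly those $x$ at which the argument of the complex product is an odd multiple of $\pi/2$, i.e., the solutions of \eqref{eq:arctan}. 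A direct check for $n=0,1$ gives $\Re\{Q_0\}=1$ and $\Re\{Q_1\}=x-(\alpha_1+1)^2/4$, matching the stated initial conditions and fixing the normalization.

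With $Q_n=\tfrac{1}{4}L_n Q_{n-1}$ where $L_n=(s+j\alpha_n)^2$, I would then extract the desired recurrence by eliminating the imaginary parts across two consecutive steps. Writing out the real and imaginary parts of $Q_n=\tfrac{1}{4}L_nQ_{n-1}$ and $Q_{n-1}=\tfrac{1}{4}L_{n-1}Q_{n-2}$ gives four linear relations; solving for $\Im\{Q_{n-2}\}$ at the $(n{-}1)$-step, using it to express $\Im\{Q_{n-1}\}$, and substituting into the real part of the $n$-step eliminates both imaginary quantities. The collapse is driven by the identity
\begin{equation*}
\Re\{L_nL_{n-1}\}\,\Im\{L_{n-1}\}-\Im\{L_nL_{n-1}\}\,\Re\{L_{n-1}\}=-\Im\{L_n\}\,|L_{n-1}|^2,
\end{equation*}
which is immediate from expanding $L_nL_{n-1}$. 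What remains is the purely real three-term recurrence
\begin{equation*}
\Re\{Q_n\} \;=\; \frac{\Im\{L_nL_{n-1}\}}{4\,\Im\{L_{n-1}\}}\,\Re\{Q_{n-1}\}\;-\;\frac{\Im\{L_n\}\,|L_{n-1}|^2}{16\,\Im\{L_{n-1}\}}\,\Re\{Q_{n-2}\}.
\end{equation*}

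To finish, I would substitute $\Re\{L_n\}=4x-1-\alpha_n^2$, $\Im\{L_n\}=2\alpha_n s$, and $|L_n|^2=(4x-1+\alpha_n^2)^2$; the factors of $s$ cancel in each ratio, and a short computation produces the key factorization $\Im\{L_nL_{n-1}\}=2s(\alpha_n+\alpha_{n-1})\bigl((4x-1)-\alpha_n\alpha_{n-1}\bigr)$, from which the two coefficients collapse to exactly those in \eqref{eq:recQn}. I expect the only nontrivial computational step to be this factorization, together with the recognition that $|L_{n-1}|^2=(4x-1+\alpha_{n-1}^2)^2$ is a perfect square; everything else reduces to bookkeeping and the verification of the initial conditions.
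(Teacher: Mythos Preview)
Your proposal is correct and follows essentially the same approach as the paper: define $Q_n$ as a complex product whose argument equals the left side of \eqref{eq:arctan}, pass from the one-step complex recursion $Q_n=\tfrac14 L_nQ_{n-1}$ to a real three-term recursion by eliminating the imaginary parts, and then identify the coefficients. Your packaging is slightly tidier in two respects---the extra factor $1/s$ in your definition of $Q_n$ makes $\Re\{Q_0\}=1$ hold on the nose rather than as a chosen normalization, and your use of the identity $\Re\{L_nL_{n-1}\}\Im\{L_{n-1}\}-\Im\{L_nL_{n-1}\}\Re\{L_{n-1}\}=-\Im\{L_n\}\,|L_{n-1}|^2$ compresses the elimination that the paper carries out with the intermediate quantities $F_i,G_i$---but the underlying argument is the same.
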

	
	\begin{proof} 
		Firstly, we must note that there exists an infinite number of polynomials sharing roots with the solutions of \eqref{eq:arctan}, according to the earlier discussion. A general class of polynomials that shares roots with expressions similar to \eqref{eq:arctan} can be described as
		\begin{align}\label{eq:atanRn(x)}
		R_n(x)=(a+jb)\prod_{i=1}^{n}\left(v_i+jw_i\right)^2+(a-jb)\prod_{i=1}^{n}\left(v_i-jw_i\right)^2.
		\end{align}
		In this particular case we must have
		\begin{align}
		\dfrac{b}{a}=\dfrac{1}{\sqrt{4x-1}},\quad \dfrac{w_i}{v_i}=\dfrac{\alpha_i}{\sqrt{4x-1}}.
		\end{align}
		Now, we can claim that
		\begin{align}\label{eq:Qn(x)}
		R_n(x)=2\Re\{Q_n(x)\},\quad Q_n(x)=(a+jb)\prod_{i=1}^{n}\left(v_i+jw_i\right)^2.
		\end{align}
		Without loss of generality, and in order to work with monic polynomials, we consider  
		\begin{align}
		v_i=\dfrac{\sqrt{4x-1}}{2},\quad w_i=\dfrac{\alpha_i}{2},\quad a=\sqrt{4x-1},\quad b=1.
		\end{align}
		It is possible to now write
		\begin{align}
		\left(\dfrac{\sqrt{4x-1}}{2}+j\dfrac{\alpha_i}{2} \right)^2=\underbrace{\dfrac{1}{4}\left((4x-1)-\alpha_i ^2\right)}_{F_i}+j\underbrace{\dfrac{\alpha_i}{2}\sqrt{4x-1}}_{G_i},
		\end{align}
		and $Q_n(x)$ can be now written as
		\begin{align}
		Q_n(x)=(a+jb)\prod_{i=1}^{n}\left(F_i+jG_i\right)
		\end{align}
		which is a complex polynomial satisfying the following recursions
		\begin{align}\label{eq:ImQn1}
		\Re\left\{Q_n(x)\right\}=&F_n\Re\left\{Q_{n-1}(x)\right\}-G_n\Im\{Q_{n-1}(x)\}\\\label{eq:ImQn2}
		\Im\left\{Q_n(x)\right\}=&F_n\Im\left\{Q_{n-1}(x)\right\}+G_n\Re\{Q_{n-1}(x)\}.
		\end{align}
		Note how $a+jb$ disappears in these recursions. Solving for a recursion on $\Re\{Q_n(x)\}$ is not a hard task if we note that from \eqref{eq:ImQn1}
		\begin{align}
		\Im\left\{Q_{n-1}(x)\right\}=\dfrac{F_n\Re\{Q_{n-1}(x)\}-\Re\{Q_n(x)\}}{G_n},
		\end{align}
		and from \eqref{eq:ImQn2}
		\begin{align}
		\Im\left\{Q_{n-1}(x)\right\}=\dfrac{\Im\left\{Q_n(x)\right\}-G_n\Re\{Q_{n-1}(x)\}}{F_n}.
		\end{align}
		Solving for $\Im\left\{Q_n(x)\right\}$ we obtain
		\begin{align}
		\Im\left\{Q_n(x)\right\}=\left(\dfrac{F_n^2}{G_n}+G_n\right)\Re\{Q_{n-1}(x)\}-\dfrac{F_n}{G_n}\Re\{Q_n(x)\}.
		\end{align}
		If we shift this backwards by 1 and replace it in the right hand side of \eqref{eq:ImQn1} yields
		\begin{align}\label{eq:recReQn}
		\Re\{Q_n(x)\}=c_n(x) \Re\{Q_{n-1}(x)\}-d_n(x)\Re\{Q_{n-2}(x)\},
		\end{align} 
		where
		\begin{align}
		c_n(x) \triangleq & F_n+F_{n-1}\dfrac{G_n}{G_{n-1}}=\dfrac{1}{4}\left((4x-1)-\alpha_n^2\right)+ \dfrac{1}{4}\left((4x-1)-\alpha_{n-1}^2\right)\frac{\alpha_n}{\alpha_{n-1}}\nonumber\\
		& \hspace{2.6cm}=\left(\frac{(4x-1)-\alpha_n\alpha_{n-1}}{\alpha_{n-1}} \right) \left(\frac{\alpha_n +\alpha_{n-1}}{4} \right) \\
		d_n(x) \triangleq & F_{n-1}^2\dfrac{G_n}{G_{n-1}}+G_{n}G_{n-1}=\dfrac{1}{16}\left((4x-1)-\alpha_{n-1}^2\right)^2\frac{\alpha_n}{\alpha_{n-1}}+
		\dfrac{1}{4}\alpha_n\alpha_{n-1}(4x-1)\nonumber\\
		& \hspace{3.4cm}=\dfrac{\alpha_n}{\alpha_{n-1}} \left(x+ \frac{(\alpha_{n-1}^2 -1)}{4} \right)^2.
		\end{align}
		Moreover, for $n=1$, we have that 
		\begin{align}
		\dfrac{Q_1(x)}{a+jb}=\left(\dfrac{\sqrt{4x-1}}{2}+j\dfrac{\alpha_1}{2}\right)^2=\dfrac{1}{4}\left(4x-1-\alpha_1^2+j2\alpha_1\sqrt{4x-1}\right),
		\end{align}
		and $\Re\{Q_1(x)\}=0$ implies
		\begin{align}
		a\left(x-\dfrac{1+\alpha_1^2}{4}\right)-b\dfrac{\alpha_1}{2}\sqrt{4x-1}=a\left(\left(x-\dfrac{1+\alpha_1^2}{4}\right)-\dfrac{\alpha_1b}{2a}\sqrt{4x-1}\right)=0.
		\end{align}
		If $a\neq 0$ and since $b/a=1/\sqrt{4x-1}$, we have that 
		\begin{align}
		x=\dfrac{1}{4}\left(1+\alpha_1\right)^2.
		\end{align}
		Therefore, with $\Re\{Q_0(x)\}=1$ and $\Re\{Q_1(x)\}=x-(\alpha_1 +1)^2/4$, the recursion \eqref{eq:recReQn} generates polynomials with roots at the solutions of \eqref{eq:arctan}.
	\end{proof} 
	
	The previous result states that the solutions of \eqref{eq:arctan} can be associated to the roots of a sequence of polynomials. It can be checked numerically that for particular values of the parameters $\alpha_i\geq 0$, the polynomials generated by the recursion \eqref{eq:recReQn} share roots with the eigenvalues of $J_n$.
	
	\begin{remark}
		The particular case where $\alpha_i = 1$ for all $i$, corresponds to 
		\begin{align}
		J_n=\left(\begin{array}{cccc} 
		1 & 1 & \cdots & 1\\
		0 & 1 & \ddots & \vdots\\
		\vdots & \ddots & \ddots & 1\\
		0 & \cdots & 0 & 1 \end{array}\right)\left(\begin{array}{cccc} 
		1 & 0 & \cdots & 0\\
		1 & 1 & \ddots & \vdots\\
		\vdots & \ddots & \ddots & 0\\
		1 & \cdots & 1 & 1 \end{array}\right),
		\end{align}
		namely, the product of the all-ones upper and lower triangular matrices. This matrix has well known eigenvalues for any $n$ given by \cite{elliott1953characteristic}
		\begin{align}\label{eq:sigmacinfty}
		\sigma(J_n)=\left\{\dfrac{1}{4}\sec^2\left(\dfrac{\pi}{2n+1}\right),\ldots,\dfrac{1}{4}\sec^2\left(\dfrac{n\pi}{2n+1}\right)\right\}.
		\end{align}
		Moreover, in this case 
		$$F_n=\left(x-\dfrac{1}{2}\right),\quad G_n=\dfrac{\sqrt{4x-1}}{2},$$ 
		and therefore 
		$$\Re\{Q_n\}=(2x-1)\Re\{Q_{n-1}\}-x^2\Re\{Q_{n-2}\},$$ 
		with $\Re\{Q_0\}=1$ and $\Re\{Q_1\}=x-1$. This recursion generates polynomials having the same roots as \eqref{eq:sigmacinfty}.
		
		We can readily check that \eqref{eq:arctan} collapses to
		\begin{align}
		(2n+1)\arctan\left(\dfrac{1}{\sqrt{4\lambda_k-1}}\right)=(2n+1)\left(\dfrac{\pi}{2}-\arctan(\sqrt{4\lambda_k-1})\right)=\dfrac{(2k+1)\pi}{2},
		\end{align}
		from where we have that 
		\begin{align}
		\lambda_k=\dfrac{1}{4}\left(1+\dfrac{1}{\tan^2\left(\dfrac{(2k+1)\pi}{2(2n+1)}\right)}\right)=\dfrac{1}{4}\sec^2\left(\dfrac{\pi(n-k)}{2n+1}\right),
		\end{align}
		for $k=1,\ldots,n,$ which again coincides with \eqref{eq:sigmacinfty}.
	\end{remark}
	
	\begin{remark}
		The recursion \eqref{eq:recReQn} can be also associated to a tridiagonal matrix through the Generalized Continuant \cite{hearon1970roots}. Let $K_n$ be the matrix 
		\begin{align}
		\displaystyle  K_n = \begin{pmatrix} r_1 & s_2 & 0 & \ldots & 0 & 0 \\ t_2 & r_2 & s_3 & \ldots & 0 & 0 \\ 0 & t_3 & r_3 & \ldots & 0 & 0 \\ \vdots & \vdots & \vdots & \ddots & \vdots & \vdots \\ 0 & 0 & 0 & \ldots & r_{n-1} & s_{n} \\ 0 & 0 & 0 & \ldots & t_{n} & r_n \end{pmatrix}
		\end{align}
		where $r_1=\Re\{Q_1\}=x-0.25(\alpha_{1}+1)^2$,  $$r_i=\left(F_i+F_{i-1}\dfrac{G_i}{G_{i-1}}\right),$$
		for $i>1$ and $s_i,t_i$ satisfy
		$$s_it_i=\left(F_{i-1}^2\dfrac{G_i}{G_{i-1}}+G_{i}G_{i-1}\right),$$
		for $i>1.$ Then, $\det(K_n)$ generates the same sequence of polynomials provided by \eqref{eq:recReQn}. \qed
	\end{remark}
	
	\begin{remark}
		It is important to note that for Proposition \ref{prop2} to hold with respect to  \eqref{eq:arctan}, every $\alpha_i$ must be non-negative. We would like to stress that this was made in order to highlight the inspiration we had for obtaining our results. The given sequence of polynomials $\Re\{Q_n(x)\}$ that satisfies \eqref{eq:recQn} is valid for any choice of the real parameters $\alpha_i$. Moreover, the eigenvalues of $J_n$ are indeed the roots of $\Re\{Q_n(x)\}$ as we will show in the following section.
	\end{remark}

	\section{Main results}\label{sec:same}
	In this section we derive the main theorem of this work, which connects the  recursive polynomials obtained in previous sections. We also derive and discuss some of its consequences.
	\begin{theorem}\label{thm1}
		Consider the $n\times n$ matrix $J_n$ defined in \eqref{eq:Jn_triang_fact} with $\alpha_{i}\in\mathbb{R}$ for $i=1,\ldots,n$. Its $n$ eigenvalues, $\lambda_k$, $k=1,\ldots,n$, are given by the $n$ roots of the polynomials $\Re\{Q_n(x)\}$.
	\end{theorem}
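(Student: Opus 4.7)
The plan is to prove that $\Re\{Q_n(x)\}$ and the characteristic polynomial $P_n(x)$ from Proposition~\ref{prop1} are proportional as polynomials in $x$ (by a constant that depends only on the parameters $\alpha_i$), and therefore share their root sets. Concretely, I will prove by strong induction on $n$ the identity
\begin{equation*}
\Re\{Q_n(x)\} \;=\; \Bigl(\textstyle\prod_{i=1}^{n}\alpha_i\Bigr)\, P_n(x),
\end{equation*}
with the empty product equal to $1$. Since the roots of $P_n$ are by definition the eigenvalues of $J_n$, this identity immediately proves the theorem whenever the multiplicative constant is nonzero.

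Both polynomial sequences satisfy three-term recurrences of the form $f_n = A_n(x)\,f_{n-1} - B_n(x)\,f_{n-2}$, so the proof reduces to two algebraic identities between the coefficients of those recurrences. Writing $A_n^P,B_n^P$ for the coefficients appearing in Proposition~\ref{prop1} and $A_n^Q,B_n^Q$ for those in Proposition~\ref{prop2}, the key claims are
\begin{equation*}
A_n^Q(x) \;=\; \alpha_n\, A_n^P(x), \qquad B_n^Q(x) \;=\; \alpha_n\alpha_{n-1}\, B_n^P(x).
\end{equation*}
Both are verified by direct algebraic simplification: for the first, one clears denominators in $A_n^P$ and checks the coefficient of $x$ and the constant term separately (the constant term reduces to $-(\alpha_n+\alpha_{n-1})(1+\alpha_n\alpha_{n-1})/(4\alpha_{n-1})$ on both sides after cancellation). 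For the second, one simply rewrites the bracket inside $B_n^P$ as $-(4x+\alpha_{n-1}^2-1)/(4\alpha_{n-1})$ and squares.

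With these identities in hand the induction is immediate. The base cases $\Re\{Q_0\}=1=P_0$ and $\Re\{Q_1\}=x-(\alpha_1+1)^2/4=\alpha_1 P_1$ are read off from the initial conditions of the two propositions. For the inductive step, substituting $\Re\{Q_{n-1}\}=(\prod_{i=1}^{n-1}\alpha_i)P_{n-1}$ and $\Re\{Q_{n-2}\}=(\prod_{i=1}^{n-2}\alpha_i)P_{n-2}$ into the recursion for $\Re\{Q_n\}$, the identities above produce a common factor $\prod_{i=1}^n\alpha_i$ in front of $A_n^P P_{n-1} - B_n^P P_{n-2}=P_n$, closing the induction. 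The case where some $\alpha_i$ vanishes is handled separately by a continuity argument: both the eigenvalues of $J_n$ and the roots of $\Re\{Q_n(x)\}$ (defined through the product representation in the proof of Proposition~\ref{prop2}, which is free of singularities) depend continuously on the parameters, so the equality of their root multisets on the dense open set $\{\alpha_i\neq 0\ \forall i\}$ extends to all of $\mathbb{R}^n$.

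The only real obstacle is the bookkeeping in the two algebraic identities $A_n^Q=\alpha_n A_n^P$ and $B_n^Q=\alpha_n\alpha_{n-1} B_n^P$; once they are in place, the induction and the continuity extension are routine, and no further manipulation of $J_n$ itself is needed.
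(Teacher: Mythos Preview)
Your proof is correct and follows essentially the same route as the paper: both argue by induction that $\Re\{Q_n(x)\}=\bigl(\prod_{i=1}^n\alpha_i\bigr)P_n(x)$, reducing the inductive step to the two algebraic identities relating the recursion coefficients (the paper phrases them as $c_n/d_n=\gamma_n/(\alpha_{n-1}\delta_n^2)$ and $d_n=\alpha_n\alpha_{n-1}\delta_n^2$, which are equivalent to your $A_n^Q=\alpha_n A_n^P$ and $B_n^Q=\alpha_n\alpha_{n-1}B_n^P$). Your treatment is in fact slightly more complete: the paper's recursions for $P_n$ and $\Re\{Q_n\}$ both carry $\alpha_i$ in denominators, so the degenerate case $\alpha_i=0$ is not literally covered there, whereas your continuity argument closes that gap cleanly.
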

	\begin{proof}
		The characteristic polynomial of $J_n$, $P_n(x)$, satisfies the recursion \eqref{eq:recJn}, namely
		\begin{align}\label{eq:PnTheo}
		P_n(x)=\gamma_n(x)P_{n-1}(x)-\delta_n(x)^2P_{n-2}(x)=\delta_{n}(x)^2\left(\dfrac{\gamma_n(x)}{\delta_{n}(x)^2}P_{n-1}(x)-P_{n-2}(x)\right) 
		\end{align}
		where
		\begin{align}
		\label{a_modificado}
		\gamma_n(x)&=\left(\dfrac{1}{\alpha_n}+\dfrac{1}{\alpha_{n-1}}\right)x -1-\dfrac{(\alpha_n-1)^2}{4\alpha_n}-\dfrac{(\alpha_{n-1}-1)^2}{4\alpha_{n-1}}\\\nonumber &=\dfrac{(\alpha_n+\alpha_{n-1})(4x-1-\alpha_n \alpha_{n-1})}{4\alpha_n \alpha_{n-1}},\\
		\label{b_modificado}
		\delta_n(x)&=\dfrac{(\alpha_{n-1}-1)^2}{4\alpha_{n-1}}-\dfrac{\alpha_{n-1}-1}{2}-\dfrac{1}{\alpha_{n-1}}x\\\nonumber
		&=-\left( \dfrac{\alpha_{n-1}^2-1+4x}{4\alpha_{n-1}}\right) ,
		\end{align}
		with initial conditions $P_0(x)=1$, and $P_1(x)=\dfrac{1}{\alpha_1}x-1-\dfrac{(\alpha_1-1)^2}{4\alpha_1}.$
		
		On the other hand, we have that the polynomials $R_n(x)$ defined in Proposition \ref{prop2} satisfy the relation
		\begin{align}
		R_n(x)=c_n(x)R_{n-1}(x)-d_n(x)R_{n-2}(x)=d_n(x)\left(\dfrac{c_n(x)}{d_n(x)}R_{n-1}(x)-R_{n-2}(x)\right),
		\end{align}
		where
		\begin{align}
		\label{c_modificado}
		c_n(x)=&\frac{\left( 4x-1-\alpha_n\alpha_{n-1}\right) \left(\alpha_n +\alpha_{n-1} \right)}{4\alpha_{n-1}}  \\
		\label{d_modificado}
		d_n(x)=&\dfrac{\alpha_n}{16\alpha_{n-1}} \left(4x-1+ \alpha_{n-1}^2  \right)^2,
		\end{align}
		with the initial conditions $R_0(x)=1$, and $R_1(x)=\left(x-\dfrac{(\alpha_1+1)^2}{4}\right)=\alpha_1 P_1(x).$
		
		We claim that $R_n(x)=\theta_nP_n(x)$ for all $n$ where $\theta_n$ is a real constant for every $n$. This would make the roots of both polynomials equal for all $n$, proving the Theorem. In order to do this, we first note from \eqref{a_modificado}, \eqref{b_modificado}, \eqref{c_modificado} and \eqref{d_modificado} that, for all $n$
		\begin{equation}
		\dfrac{c_n(x)}{d_n(x)}=\dfrac{1}{\alpha_{n-1}}\dfrac{\gamma_n(x)}{\delta_n(x)^2},\qquad \;
		d_n(x)=\alpha_n \alpha_{n-1} \delta_n(x)^2.
		\end{equation}
		In particular, for $n=2$, we have
		\begin{align}
		R_2(x)=&d_2(x)\left(\dfrac{c_2(x)}{d_2(x)}R_1(x)-R_0(x)\right).
		\end{align}
		Substituting the previous expressions and recalling that $R_0(x)=P_0(x)$ and $R_1(x)=\alpha_1 P_1(x)$ we have 
		\begin{align}
		R_2(x)=\alpha_1 \alpha_2 \delta_2(x)^2\left(\dfrac{\gamma_2(x)}{\delta_2(x)^2}P_1(x)-P_0(x)\right)
		\end{align}
		which, according to \eqref{eq:PnTheo}, yields
		\begin{align}
		R_2(x)=\alpha_1 \alpha_2 P_2(x).
		\end{align}
		We then proceed by induction. Let us assume that for $k=1,\ldots,n-1$, the following equality holds:
		\begin{align}\label{eq:induc}
		R_{k}(x)=\left(\prod_{i=1}^{{k}}\alpha_i\right)P_{k}(x),
		\end{align}
		We know that
		\begin{align}
		R_n(x)=\alpha_{n} \alpha_{n-1} \delta_n(x)^2\left(\dfrac{1}{\alpha_{n-1}}\dfrac{\gamma_n(x)}{\delta_n(x)^2}R_{n-1}(x)-R_{n-2}(x)\right).
		\end{align}
		Substituting \eqref{eq:induc} we obtain
		\begin{align}
		R_n(x)=&\alpha_{n} \alpha_{n-1} \delta_n(x)^2\left(\dfrac{1}{\alpha_{n-1}}\dfrac{\gamma_n(x)}{\delta_n(x)^2}\left(\prod_{i=1}^{{n-1}}\alpha_i\right)P_{n-1}(x)-\left(\prod_{i=1}^{{n-2}}\alpha_i\right)P_{n-2}(x)\right).
		\end{align}
		Factorizing and reordering we finally obtain
		\begin{align}
		R_n(x)=&\alpha_{n} \alpha_{n-1} \left(\prod_{i=1}^{{n-2}}\alpha_i\right)\left(\gamma_n(x)P_{n-1}(x)-\delta_n(x)^2P_{n-2}(x)\right)=\left(\prod_{i=1}^{{n}}\alpha_i\right)P_n(x),
		\end{align}
		showing than indeed $P_n(x)$ and $R_n(x)$ share the same roots.
	\end{proof}

	Theorem \ref{thm1} states that the characteristic polynomial of $J_n$ can be expressed as the sum of a complex polynomial $Q_n(x)$ and its conjugate. This complex polynomial is a simple product of $n$ quadratic factors and a first order factor. The real parameters $\alpha_i$ appear each in one of the quadratic factors. There are a few obvious observations that can be made. For example, the parameters $\alpha_i$ can be arbitrarily permuted and the eigenvalues will not change. This particular property is to be expected when considering the optimal control problems that give rise to $J_n$. In said problems, it is entirely expected to have no numerical change on the result when permuting the problem parameters. We can also note that if $\alpha_i=0$ for some $i$, then there will be an eigenvalue of multiplicity $m$ at $x=0.25$, with $m$ equal to the number of parameters equal to $0$.
	
	Some less obvious consequences are given in the following corollaries.
	\begin{corollary}\label{cor:posit}
		If the parameters $\alpha_i>0$ for all $i$, then the $n$ eigenvalues  of  $J_n$, say $\lambda_k$ for $k=1,\ldots,n$,  satisfy \eqref{eq:arctan}. Also $\lambda_k$ are real positive, distinct, and monotonically decrease with $k$.
	\end{corollary}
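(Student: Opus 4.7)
The strategy is to combine Theorem~\ref{thm1}---which identifies $\sigma(J_n)$ with the zero set of $\Re\{Q_n(x)\}$---with the original trigonometric motivation of Proposition~\ref{prop2}. From the identity $R_n(x)=\bigl(\prod_{i=1}^{n}\alpha_i\bigr)P_n(x)$ obtained inside the proof of Theorem~\ref{thm1}, together with the hypothesis $\prod_i\alpha_i>0$, the real polynomial $\Re\{Q_n\}=\tfrac{1}{2}R_n$ has exact degree $n$. My plan is therefore to exhibit $n$ distinct real roots of $\Re\{Q_n\}$ in $(1/4,\infty)$ via a monotonicity argument on $\arg Q_n(x)$; the fundamental theorem of algebra will then force these to be all of the roots.

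For $x>1/4$ set $s:=\sqrt{4x-1}>0$. With the positive square root, the factors $a+jb=s+j$ and $v_i+jw_i=(s+j\alpha_i)/2$ appearing in Proposition~\ref{prop2} all lie strictly in the open first quadrant (both coordinates are positive thanks to $\alpha_i>0$), so
\[\arg Q_n(x)=\theta_n(x):=\arctan\!\left(\tfrac{1}{s}\right)+2\sum_{i=1}^{n}\arctan\!\left(\tfrac{\alpha_i}{s}\right).\]
Moreover $|Q_n(x)|^2=4x\prod_{i=1}^{n}\bigl((s^{2}+\alpha_i^{2})/4\bigr)^{2}>0$ throughout $(1/4,\infty)$, so I may write $\Re\{Q_n(x)\}=|Q_n(x)|\cos\theta_n(x)$, which vanishes on that interval exactly when $\theta_n(x)=(2k-1)\pi/2$ for some positive integer $k$; this is precisely equation \eqref{eq:arctan}.

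The map $\theta_n$ is continuous and strictly decreasing on $(1/4,\infty)$, since each summand is strictly decreasing in $s$ and $s$ itself increases with $x$. Its boundary limits are $\theta_n(x)\to(2n+1)\pi/2$ as $x\to 1/4^{+}$ (each arctangent tending to $\pi/2$) and $\theta_n(x)\to 0$ as $x\to\infty$. By the intermediate value theorem, for each $k=1,\ldots,n$ there is a unique $\lambda_k\in(1/4,\infty)$ with $\theta_n(\lambda_k)=(2k-1)\pi/2$, and strict monotonicity forces the ordering $\lambda_1>\lambda_2>\cdots>\lambda_n>1/4>0$. Because $\Re\{Q_n\}$ has degree exactly $n$, these $n$ distinct values must exhaust its zero set, and by Theorem~\ref{thm1} they coincide with $\sigma(J_n)$. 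I expect the main delicate point to be ensuring that no root is hidden in the complementary region $x\le 1/4$, where $\sqrt{4x-1}$ becomes imaginary and the polar factorization of $Q_n$ breaks down; this is handled cleanly by the exact-degree count above, but without that count the angle argument by itself would leave room for spurious real or complex roots.
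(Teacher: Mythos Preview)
Your argument is correct and follows essentially the same route as the paper: both invoke Theorem~\ref{thm1}, then use the strict monotonicity of $\theta_n(x)$ on $(1/4,\infty)$ together with its range $(0,(2n{+}1)\pi/2)$ to produce exactly $n$ ordered solutions of \eqref{eq:arctan}. Your version is slightly tighter in one place: you make explicit that $\Re\{Q_n\}$ has degree exactly $n$ (via $R_n=(\prod_i\alpha_i)P_n$), which cleanly rules out any additional real roots in $(-\infty,1/4]$ or complex roots---a point the paper leaves implicit.
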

	\begin{proof}
		We first note that the first claim is true given the motivation behind Proposition \ref{prop2} and the results in Theorem \ref{thm1} that implies that the characteristic polynomial of $J_n$ share the same roots of the polynomial in Proposition \ref{prop2}.
		For the second part, we also note that the right hand side of \eqref{eq:arctan} increases with $k$ and that the $n$ equations correspond to the $n$ intersections of the left hand side function with the constants on the right hand side. Moreover, since $\arctan(\cdot)$ is a continuous odd function and the arguments $\alpha_i(4x-1)^{-1/2}$ are always positive, the solutions must be positive if every $\alpha_i$ is positive. It can also be noted that 
		\begin{align}
		\dfrac{d}{du}\left(\arctan\left(u\right) +2\sum_{i=1}^{n}\arctan\left(\alpha_iu\right)\right)>0,
		\end{align}
		for every $u>0$, whenever $\alpha_i>0$ for all $i$ and
		\begin{align}
		\dfrac{d}{dx}\left(\dfrac{\alpha_i}{\sqrt{4x-1}}\right)<0
		\end{align}
		for all $x>1/4$. This implies for $\alpha_i>0$ for all $i$, that the left hand side of \eqref{eq:arctan} is a strictly decreasing monotonic function that maps $x>1/4$ to the interval $(0,(2n+1)\pi/2)$  Hence, the $n$ eigenvalues $\lambda_k$ for $k=1,\ldots,n$ are positive, distinct and monotonically decrease with $k$ in this case.
	\end{proof}
	
	Corollary \eqref{cor:posit} is particularly useful to deal with the optimal control problem  that motivates the study of the eigenvalues of $J_n$ since, in that case, each $\alpha_i$ is known to be positive. The properties of the nonlinear equation \eqref{eq:arctan} allow us to analyze the solution of the optimal control problem in a more intuitive way for a deeper understanding of the underlying problems.
	
	\begin{corollary}\label{cor:repeat}
		If $\alpha=\alpha_m=-\alpha_l$ for some $m\neq l$, then $J_n$ has an eigenvalue of multiplicity $2$ at $\lambda=(1-\alpha^2)/4$ and the remaining $n-2$ eigenvalues correspond to the solutions of \eqref{eq:arctan} for $k=0,\ldots,n-3$, removing the terms for $\alpha_l$ and $\alpha_m$.
	\end{corollary}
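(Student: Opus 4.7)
The plan is to work directly with the explicit product formula for $Q_n(x)$ derived in the proof of Proposition \ref{prop2}, rather than with the recursions. Recall that, up to the inessential prefactor $(a+jb)$,
\begin{equation*}
Q_n(x) = (a+jb)\prod_{i=1}^{n}\bigl(F_i + jG_i\bigr),\qquad F_i=\tfrac{1}{4}\bigl((4x-1)-\alpha_i^2\bigr),\quad G_i=\tfrac{\alpha_i}{2}\sqrt{4x-1}.
\end{equation*}
The crucial observation is that $F_i$ depends on $\alpha_i$ only through $\alpha_i^2$, whereas $G_i$ is linear in $\alpha_i$. Thus if $\alpha_m=\alpha$ and $\alpha_l=-\alpha$ for some $m\neq l$, then $F_m=F_l$ and $G_m=-G_l$, so the two factors $(F_m+jG_m)$ and $(F_l+jG_l)$ are complex conjugates of each other.

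The first step is therefore to compute their product, which is real:
\begin{equation*}
(F_m+jG_m)(F_l+jG_l)=F_m^2+G_m^2=\tfrac{1}{16}\bigl((4x-1)-\alpha^2\bigr)^2+\tfrac{\alpha^2(4x-1)}{4}=\tfrac{1}{16}\bigl(4x-1+\alpha^2\bigr)^2.
\end{equation*}
Since this factor is real, it can be pulled outside both $Q_n$ and $\overline Q_n$. Let $\widetilde Q_{n-2}(x)$ denote the analogous complex polynomial built from the parameter set $\mathcal{A}\setminus\{\alpha_m,\alpha_l\}$. Then
\begin{equation*}
\Re\{Q_n(x)\}=\tfrac{1}{16}\bigl(4x-1+\alpha^2\bigr)^2\,\Re\{\widetilde Q_{n-2}(x)\},
\end{equation*}
so by Theorem \ref{thm1} the characteristic polynomial of $J_n$ factors (up to a constant) as $(4x-1+\alpha^2)^2$ times the characteristic polynomial of the matrix $J_{n-2}$ built from the remaining parameters.

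From this factorization the conclusions follow immediately. The quadratic factor $(4x-1+\alpha^2)^2=16\bigl(x-(1-\alpha^2)/4\bigr)^2$ contributes a double root at $\lambda=(1-\alpha^2)/4$, which is therefore an eigenvalue of $J_n$ of (at least) multiplicity two. The remaining $n-2$ eigenvalues coincide with the roots of $\Re\{\widetilde Q_{n-2}(x)\}$, and hence, by the discussion of Proposition \ref{prop2} applied to the reduced parameter set, they satisfy the equation \eqref{eq:arctan} with the two offending terms removed. No real obstacle arises here once the conjugate-pair observation is made; the only point requiring a little care is the algebraic identity $F_m^2+G_m^2=\tfrac{1}{16}(4x-1+\alpha^2)^2$, which one verifies by expanding and recombining.
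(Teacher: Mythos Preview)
Your proof is correct and follows essentially the same route as the paper: both exploit the product representation of $Q_n$ (equivalently $R_n$) to observe that when $\alpha_m=-\alpha_l=\alpha$ the two corresponding factors multiply to the real quantity $(4x-1+\alpha^2)^2$ (up to a constant), which then factors out of $\Re\{Q_n(x)\}$. The only cosmetic difference is that the paper works with the unsquared factors $(\sqrt{4x-1}+j\alpha_i)^2$ in $R_n$, whereas you use the squared-out form $(F_i+jG_i)$ and phrase the key step as a complex-conjugacy observation; the underlying idea is identical.
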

	
	\begin{proof}
		The second part of the corollary follows directly from setting $\alpha_m=-\alpha_l$ in \eqref{eq:arctan} and from the fact that $\arctan(\cdot)$ is an odd function. This cancels out the two terms $2\arctan(\alpha_m/\sqrt{4x-1})$ and $2\arctan(\alpha_l/\sqrt{4x-1})$ and the solutions of these equations do not depend neither on $\alpha_m$ nor $\alpha_l$. We can alternatively see that the solutions of \eqref{eq:arctan} are equivalent to finding the roots of 
		\begin{align}
		R_n(x)=(\sqrt{4x-1}+j)\prod_{i=1}^{n}\left(\sqrt{4x-1}+j\alpha_i\right)^2+(\sqrt{4x-1}-j)\prod_{i=1}^{n}\left(\sqrt{4x-1}-j\alpha_i\right)^2,	
		\end{align}
		and setting $\alpha_m=\alpha=-\alpha_l$ yields
		\begin{align}
		R_n(x)=&(\sqrt{4x-1}+j)(4x-1+\alpha^2)^2\prod_{i=1,i\neq m,i\neq l}^{n}\left(\sqrt{4x-1}+j\alpha_i\right)^2+\\\nonumber
		&(\sqrt{4x-1}-j)(4x-1+\alpha^2)^2\prod_{i=1,i\neq m,i\neq l}^{n}\left(\sqrt{4x-1}-j\alpha_i\right)^2,	
		\end{align}
		and $R_n(x)=(4x-1+\alpha^2)^2\tilde{R}_{n-2}(x)$ where $\tilde{R}_{n-2}(x)$ corresponds to the polynomial $R_{n-2}(x)$ with the remaining $n-2$ parameters from the original problem relabelled.
	\end{proof}
	
	\begin{remark}
		It is interesting to note that whenever two parameters satisfy $\alpha_m=-\alpha_j$, the remaining roots do not depend on them. This invariance property seems natural considering the problems that give rise to the study of these matrices. In particular, several optimal approximation problems in Hardy spaces involve the computation of the eigenvalues of related matrices \cite{chui2012discrete}. This invariance property could be key in studying the solutions to such problems or characterizing the equivalent eigenvalue problems. A similar thing can be said about the permutation property of the parameters $\alpha_{i}$.
		
		We can also check that $J_n$ will have an eigenvalue at the origin if at least one $\alpha_i=-1$. Using Corollary \ref{cor:repeat}, $J_n$ will be nilpotent if and only if $\alpha_{i}=\pm 1$ for all $i$, and the number of parameters equal to $-1$ is equal to $\lceil n/2 \rceil$, where $\lceil \cdot \rceil$ denotes the ceiling function.
	\end{remark}
	
	\section{Examples and other miscellaneous derivations}\label{sec:numerics}
	In this section we provide some examples that illustrate applications of the results and some of their practical features.
	\subsection{Sensitivity of the eigenvalues to changes of the parameters}
	Having \eqref{eq:arctan} allows us to easily compute the partial derivatives of the solutions with respect to the parameters $\alpha_i$. Since we have that the right hand side of \eqref{eq:arctan} is a constant, the expressions are valid for every eigenvalue. In particular, using the implicit function theorem, we have that any solution $\lambda_k$ of \eqref{eq:arctan}, as a function of the parameters $\alpha_i$, satisfies
	\begin{align}\label{eq:partials}
	\dfrac{\partial \lambda_k}{\partial \alpha_i}=2\dfrac{4\lambda_k-1}{4\lambda_k-1+\alpha_i^2}\left(\dfrac{1}{2\lambda_k}+4\sum_{j=1}^{n}\dfrac{\alpha_j}{4\lambda_k-1+\alpha_j^2}\right)^{-1}.
	\end{align}
	Note that the factor $(4x-1+\alpha_i^2)$ outside of the parenthesis cancels out with a term inside it. As stated in corollary \ref{cor:posit}, for positive parameters $\alpha_i$, we have positive and distinct eigenvalues. Even though it is to be expected from the expressions for $J_n$, in this case, \eqref{eq:partials} implies that the solutions $x$ become larger with the increase of any $\alpha_i$.
	
	We can also claim, for a fixed set of $n+1$ parameters that, if $\alpha_i>0$ for all $i$,  $\rho(J_n)=\overline{\lambda}_n<\overline{\lambda}_{n+1}=\rho(J_{n+1})$. To see this consider the left hand side of \eqref{eq:arctan} for $n$ parameters and evaluate it at $\overline{\lambda}_{n+1}$
	\begin{align}
	\arctan\left(\dfrac{1}{\sqrt{4\overline{\lambda}_{n+1}-1}}\right)+2\sum_{i=1}^n\arctan&\left(\dfrac{\alpha_{i}}{\sqrt{4\overline{\lambda}_{n+1}-1}}\right)= \\\nonumber
	&\dfrac{\pi}{2}-2\arctan\left(\dfrac{\alpha_{n+1}}{\sqrt{4\overline{\lambda}_{n+1}-1}}\right).
	\end{align}
	The expression on the left above is strictly decreasing and equals $\pi/2$ only at $\overline{\lambda}_n$. Assuming that $\overline{\lambda}_n>\overline{\lambda}_{n+1}$ contradicts the latter.
	
	\subsection{Numerical computations and simple bounds}
	The alternative equation for computing the eigenvalues of $J_n$ when the parameters are non-negative is suitable for iterative methods that utilize derivatives, since these are readily available in analytical form. Moreover, Newton's method can be used to obtain easy bounds for the eigenvalues if we let
	\begin{align}
	f_{n,k}(x)=\arctan\left(\dfrac{1}{\sqrt{4x-1}}\right) +2\sum_{i=1}^{n}\arctan\left(\dfrac{\alpha_i}{\sqrt{4x-1}}\right)-(2k-1)\dfrac{\pi}{2}.
	\end{align}
	It is not hard to verify that for $n=2$ the largest eigenvalue of $J_2$ in this case satisfies
	\begin{align}
	\rho(J_2)\geq \dfrac{(\alpha_1+1)^2}{4}+\dfrac{(\alpha_2+1)^2}{4}.
	\end{align}
	Extrapolating, a simple, albeit conservative, lower bound for arbitrary $n$ is given by
	\begin{align}
	\rho(J_n)\geq x_0-\dfrac{f_{n,k}(x_0)}{f_{n,k}'(x_0)},
	\end{align}
	where 
	\begin{align}
	x_0=\sum_{i=1}^{n}\dfrac{(\alpha_i+1)^2}{4}.
	\end{align}
	
	\subsection{Properties of the roots for negative parameters}
	If any parameter $\alpha_{i}$ is negative, we cannot use \eqref{eq:arctan} in a straightforward fashion. However, $R_n(x)$ given by the recursion \eqref{eq:atanRn(x)} remains valid and all the eigenvalues of $J_n$ are roots of of $R_n(x).$ It should be clear to the reader that the roots of a polynomial are continuous functions of its parameters. So far we have shown some special cases where we can claim the exact location of some roots for particular parameter values. It is also a well known fact that in the real parameter case, roots with multiplicity come from complex conjugate pairs and split into real distinct roots as one parameter varies or they remain complex \cite{hinrichsen2005mathematical}. Hence, we have to consider the case of negative and even complex conjugate pairs as roots of $R_n(x)$ when at least one parameter is negative. We will illustrate this with the simplest case $n=1$. As we know the solution, we have that
	\begin{align}
	\lambda_1(\alpha)=\dfrac{(1+\alpha)^2}{4},
	\end{align}
	which evaluated at $Q_n(x)$ yields
	\begin{align}
	Q_n(\lambda_1(\alpha))=(\sqrt{(1+\alpha)^2-1}+j)\left(\sqrt{(1+\alpha)^2-1}+j\alpha\right)^2.
	\end{align}
	As we already know, when $\alpha\geq 0$, the solution is equivalent to solving \eqref{eq:arctan}. If $\alpha=-1$ the solution is $\lambda_1=0.$ Substituting in the previous expression we have that
	\begin{align}
	Q_n(\lambda_1(-1))=(\sqrt{-1}+j)\left(\sqrt{-1}-j\right)^2=0,
	\end{align}
	that is, both the real and imaginary part of $Q_n(x)$ are $0$ at $x=\lambda_1(-1).$ Clearly this case is not covered directly by \eqref{eq:arctan}. However, if $\alpha=-2$
	\begin{align}
	Q_n(\lambda_1(-2))=j\left(-j\right)^2=-j,
	\end{align}
	which has per argument $-\pi/2$, and is also not covered by \eqref{eq:arctan}. 
	
	We can distinguish a few extra cases for posing alternative equations. A real root that satisfies $4\lambda_k-1<0$ could be recovered by rewriting
	\begin{align}
	Q_n(x)&=(j\sqrt{1-4x}+j)\prod_{i=1}^n\left(j\sqrt{1-4x}+j\alpha_i\right)^2\\
    \nonumber	&=(-1)^nj(\sqrt{1-4x}+1)\prod_{i=1}^n\left(\sqrt{1-4x}+\alpha_i\right)^2.
	\end{align}
	Then we have (note the appearance of the superfluous solution $x=0.25$.)
	\begin{align}\label{eq:Pu}
	R_n(x)=(-1)^nj&\left((\sqrt{1-4x}+1)\prod_{i=1}^n\left(\sqrt{1-4x}+\alpha_i\right)^2+\right.\\\nonumber
	&\left.(\sqrt{1-4x}-1)\prod_{i=1}^n\left(\sqrt{1-4x}-\alpha_i\right)^2\right),
	\end{align}
	and since we are looking for real roots we have that the left term inside the parenthesis is always positive. At the same time, the right term inside the parenthesis is only negative whenever $\sqrt{1-4x}<1$. We can also note that in this case, $R_n(x)$ is always pure imaginary. However, if there exists a value of $x$ that vanishes the expression between parenthesis, we will have that $1-4x<1$, or equivalently, $x>0.$ It is possible to claim that when only one parameter $\alpha_{m}$ is negative, a necessary and sufficient condition to have an eigenvalue in $(0,0.25)$ is $\alpha_{m}\in (-1,0).$
	
	There is one exception to this. As we stated in Corollary \ref{cor:repeat}, one admitted solution for the case $\alpha_m=-\alpha_l=\alpha$ and located at $x=(1-\alpha^2)/4$, which is only negative for $|\alpha|> 1$. These solutions vanish both the real and imaginary part of $Q_n(x)$ and are the only case where real negative roots can be found as eigenvalues of $J_n$.
	
	In general, if some $\alpha_i<0$, all the real positive eigenvalues of $J_n$ satisfying $\lambda_k>0.25$ are found by solving
	\begin{align}\label{eq:atanex1}
	\arctan\left(\dfrac{1}{\sqrt{4\lambda_k-1}}\right) +2\sum_{i=1}^{n}\arctan\left(\dfrac{\alpha_i}{\sqrt{4\lambda_k-1}}\right)=(2k-1)\dfrac{\pi}{2},
	\end{align}
	where $k\in\mathbb{Z},$ that is, the right hand side of the equation above can also be a negative odd multiple of $pi/2$. The number of feasible solutions could be studied by finding the global maximum and minimum of the sum of $\arctan(\cdot)$ functions, and counting the number of times that an odd multiple of $\pi/2$ falls between this interval. Since we are assuming real parameters, the remaining eigenvalues must come in complex conjugate pairs or belong to the interval $[0,0.25]$
	
	For example, let us consider $n=5$ with $A=\{5,-0.1,3,-2,1.5\}$. In Figure \ref{fig:atanex1} we see all the solutions of \eqref{eq:atanex1} which are approximately $\lambda_1\approx 0.9821$ and $\lambda_2=22.5527$. It can be noted (note the zoom in of Figure \ref{fig:atanex1}) that since there are negative parameters, the slope of the sum of $\arctan(\cdot)$ functions is no longer negative for all $x$. It is possible, for some values of the parameters, for a single equation to have more than one solution.
	
	\begin{figure}[!th]\begin{center}
			\includegraphics[width=\textwidth]{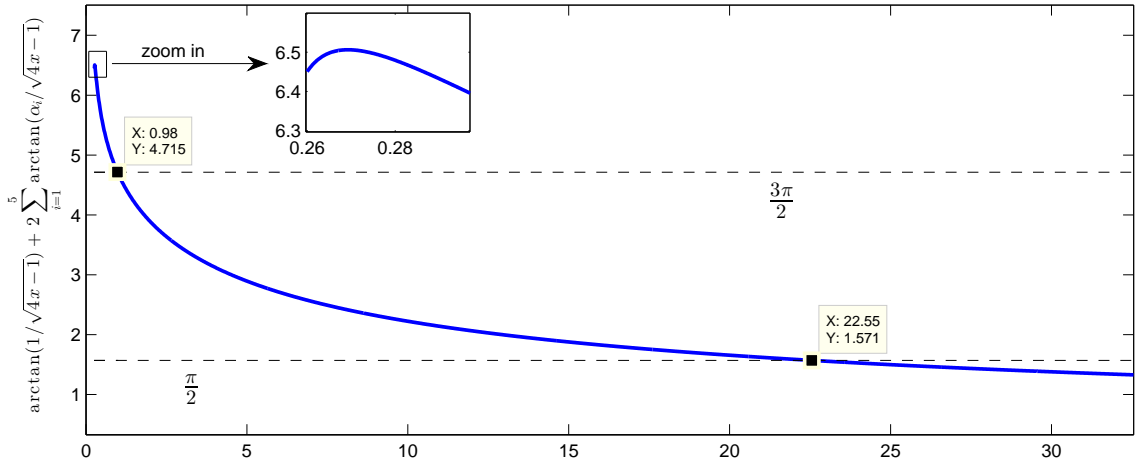}\\
			\caption{Graphical solutions of \eqref{eq:atanex1}} \label{fig:atanex1}
		\end{center}
	\end{figure} 
	
	We are still missing 3 eigenvalues. In Figure \ref{fig:Puex1} we have the plot of the imaginary part of $R_n(x)$ assuming $x<0.25$. There is a single cross with the $x$ axis at $\lambda_3\approx 0.2287<0.25$. The last two eigenvalues are a complex pair located at $\lambda_{4,5}=-0.7492\pm j0.03131$.
	
	\begin{figure}[!th]\begin{center}
			\includegraphics[width=\textwidth]{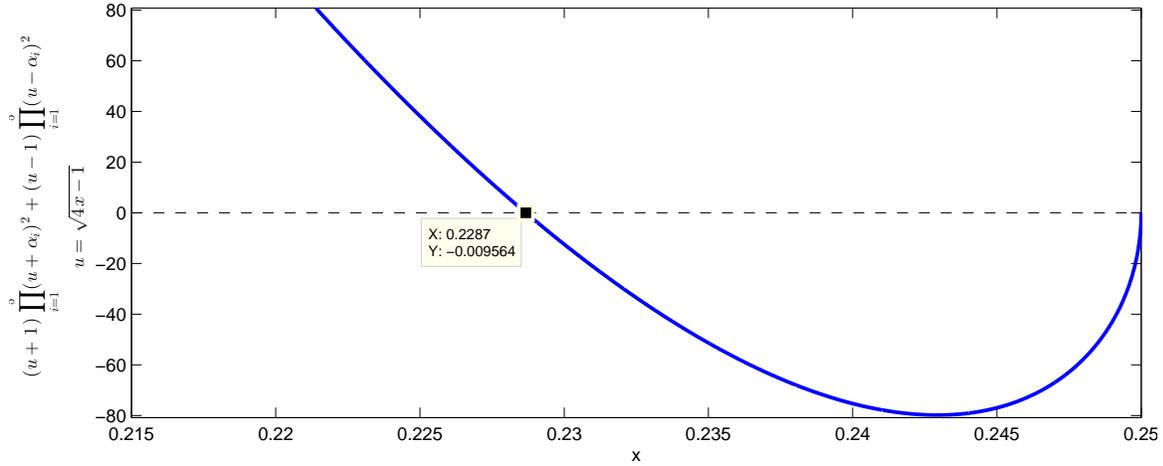}\\
			\caption{Graphical solutions of \eqref{eq:Pu}} \label{fig:Puex1}
		\end{center}
	\end{figure} 
	
	\subsection{Root locus of a particular case}
	To illustrate some of our previous comments we have plotted the root locus of $R_n(x)$ for the fixed parameters $A\setminus\{\alpha_{1}\}=\{3,-2,-5,1.5\}$ and $\alpha_{1}\in [-200,200]$. For these selections of the parameters, $J_n$ is given by
	\begin{equation}\label{eq:Jnrlocus}
	J_n=	
	\begin{pmatrix}
	\left(\dfrac{\alpha_1}{2} + \dfrac{1}{2}\right)^2 - \dfrac{5\, \alpha_1}{2} & - \dfrac{21}{2} & 8 & \dfrac{5}{2} & \dfrac{15}{8} \\[3mm]
	-\dfrac{7\, \mathrm{a1}}{2} & - \dfrac{25}{2} & 8 & \dfrac{5}{2} & \dfrac{15}{8} \\[3mm]
	- 4\, \mathrm{a1} & -12 & \dfrac{29}{4} & \dfrac{5}{2} & \dfrac{15}{8}\\[3mm]
	-\dfrac{\mathrm{a1}}{2} & - \dfrac{3}{2} & 1 & - \dfrac{7}{2} & \dfrac{15}{8}\\[3mm]
	\dfrac{5\, \mathrm{a1}}{4} & \dfrac{15}{4} & - \dfrac{5}{2} & - \dfrac{25}{4} & \dfrac{25}{16}
	\end{pmatrix}.
	\end{equation}

	We can observe that negative roots are only present when $\alpha_{1}$ equals any value in $\{-3,2,5,-1.5\}$ with locations $0.25(1-\alpha_i^2)$, that is $-6,-2,-0.75$ and $-0.3125$. There are no other real negative roots for any value of $\alpha_1$.

	\begin{figure}[!h]\begin{center}
			\includegraphics[width=\textwidth]{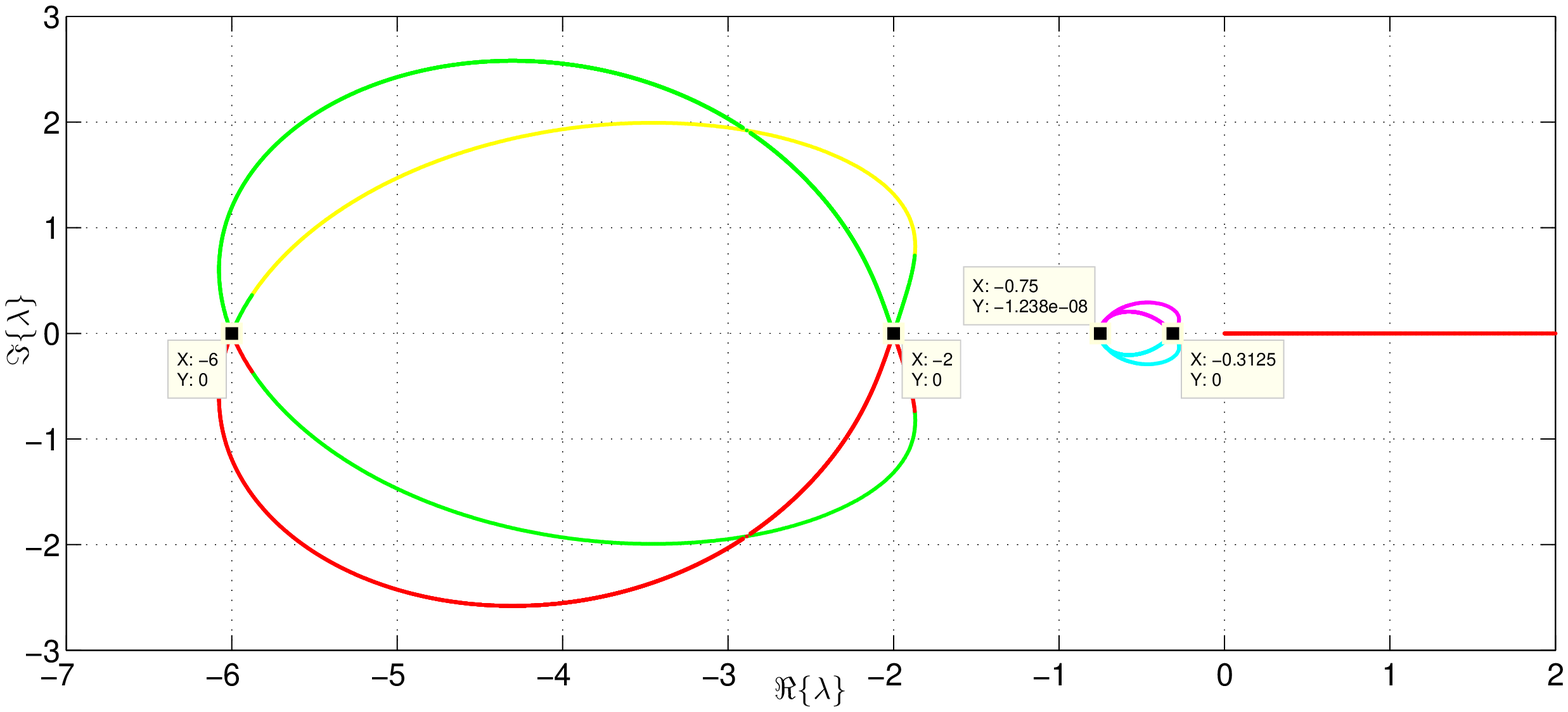}\\
			\caption{Root locus for varying $\alpha_1$} \label{fig:rootloc}
		\end{center}
	\end{figure}

	\subsection{Another closed form solution}
	
	We can use our results to conclude some other simple facts. For example, if $\alpha_1=-1$, we have 
	\begin{align}
	&R_n(x)=\\\nonumber
	&=4x(\sqrt{4x-1}-j)\prod_{i=2}^{n}\left(\sqrt{4x-1}+j\alpha_i\right)^2+4x(\sqrt{4x-1}+j)\prod_{i=2}^{n}\left(\sqrt{4x-1}-j\alpha_i\right)^2 \\\nonumber
	&=4x\left((\sqrt{4x-1}-j)\prod_{i=2}^{n}\left(\sqrt{4x-1}+j\alpha_i\right)^2+(\sqrt{4x-1}+j)\prod_{i=2}^{n}\left(\sqrt{4x-1}-j\alpha_i\right)^2\right),
	\end{align}
	and the positive roots of the expression between parenthesis are the solutions of
	\begin{align}
	-\arctan\left(\dfrac{1}{\sqrt{4\lambda_k-1}}\right)+2\sum_{i=2}^{n}\arctan\left(\dfrac{\alpha_{i}}{\sqrt{4\lambda_k-1}}\right)=(2k-1)\dfrac{\pi}{2},
	\end{align}
	with $k\in\mathbb{Z}.$ This is the same expression as the one obtained by setting $\alpha_1=-1$ in \eqref{eq:arctan}. However, if $\alpha_i=-1$ for all $i$ we have
	\begin{align}
	(1-2n)\arctan\left(\dfrac{1}{\sqrt{4\lambda_k-1}}\right)=(2k-1)\dfrac{\pi}{2},
	\end{align}
	which only has solutions when $k\leq 0$. In such case we have that the remaining $n-1$ eigenvalues in this case are
	\begin{align}
	\lambda_k=\dfrac{1}{4}\left(1+\cot^2\left(\dfrac{1+2k}{2n-1}\dfrac{\pi}{2}\right)\right)=\dfrac{1}{4}\csc^2\left(\dfrac{1+2k}{2n-1}\dfrac{\pi}{2}\right),\quad k=0,\ldots,n-2.
	\end{align}
	A simple numerical example corroborates these values. If $k=n-1$ one can note that the resulting value for the above expression is $0.25$. According to the discussion on the previous subsection, these correspond to the roots of
	\begin{align}
	R_n(x)=&4x\left((\sqrt{1-4x}-1)^{2n-1}+(\sqrt{1-4x}+1)^{2n-1}\right),
	\end{align}
	which are connected in a straightforward manner to the roots of the unity. Indeed, by setting $u=\sqrt{1-4x}$, we have that the roots in this case are given by the solutions of
	\begin{align}
	\left(\dfrac{u-1}{u+1}\right)^{2n-1}=-1.
	\end{align}

	\subsection{Limit cases}
	We finally briefly review some limit cases. If a single parameter satisfies $|\alpha_i|\rightarrow \infty$, then one eigenvalue tends to $+\infty,$ and the remaining $n-1$ eigenvalues tend to the eigenvalues of $J_{n-1}$ with $\alpha_{i}$ removed from $A$. This can be easily obtained from any of the expressions for the characteristic polynomial of $J_n$. We can see in Figure \ref{fig:rootloc} that some branches of the root locus must have a loop, when a single parameter moves from $-\infty$ to $+\infty$.

	\section{Conclusion}\label{sec:conclusion}
	We have obtained the characteristic polynomial of a structured $n\times n$ matrix with $n$ real parameters as the solution of a recursive relation. The matrix in question is fairly non trivial and for some selections of the parameters it corresponds to well studied cases. The provided framework allows to obtain a surprising amount of insight on the behaviour of the $n$ eigenvalues, in terms of the respective parameters. For example, we have shown that when they are positive, all the eigenvalues are also real and positive. Moreover, they can be computed by means of solving $n$ equations (one for each eigenvalue) involving inverse trigonometric functions with interesting properties. While the alternative method does not always allow for an exact computation of the eigenvalues, it does allow for approximations and the study of other behaviours in terms of the parameters. 
	
	A full characterization of possible negative and complex eigenvalues, when we allow for the presence of negative parameters, is missing. However, we provided some initial discussion and numerical simulations to illustrate the case. This was mostly possible due to our main results. 
	
	We believe that the class of matrices with a characteristic polynomial given as the sum of two complex conjugate polynomials, plays an important role in the study of certain optimal control problems and model order reduction techniques.

	\section*{Acknowledgement}
	The authors were supported by the Advanced Center for Electrical
	and Electronic Engineering, Basal Project FB0008, and by the Grants FONDECYT 3160738 and 1161241 CONICYT Chile.

	\bibliographystyle{abbrv}
	\bibliography{biblio}
	
\end{document}